\begin{document}
\numberwithin{equation}{section}

\def\1#1{\overline{#1}}
\def\2#1{\widetilde{#1}}
\def\3#1{\widehat{#1}}
\def\4#1{\mathbb{#1}}
\def\5#1{\frak{#1}}
\def\6#1{{\mathcal{#1}}}

\newcommand{\de}{\partial}
\newcommand{\R}{\mathbb R}
\newcommand{\Ha}{\mathbb H}
\newcommand{\al}{\alpha}
\newcommand{\tr}{\widetilde{\rho}}
\newcommand{\tz}{\widetilde{\zeta}}
\newcommand{\tk}{\widetilde{C}}
\newcommand{\tv}{\widetilde{\varphi}}
\newcommand{\hv}{\hat{\varphi}}
\newcommand{\tu}{\tilde{u}}
\newcommand{\tF}{\tilde{F}}
\newcommand{\debar}{\overline{\de}}
\newcommand{\Z}{\mathbb Z}
\newcommand{\C}{\mathbb C}
\newcommand{\Po}{\mathbb P}
\newcommand{\zbar}{\overline{z}}
\newcommand{\G}{\mathcal{G}}
\newcommand{\So}{\mathcal{S}}
\newcommand{\Ko}{\mathcal{K}}
\newcommand{\U}{\mathcal{U}}
\newcommand{\B}{\mathbb B}
\newcommand{\oB}{\overline{\mathbb B}}
\newcommand{\Cur}{\mathcal D}
\newcommand{\Dis}{\mathcal Dis}
\newcommand{\Levi}{\mathcal L}
\newcommand{\SP}{\mathcal SP}
\newcommand{\Sp}{\mathcal Q}
\newcommand{\A}{\mathcal O^{k+\alpha}(\overline{\mathbb D},\C^n)}
\newcommand{\CA}{\mathcal C^{k+\alpha}(\de{\mathbb D},\C^n)}
\newcommand{\Ma}{\mathcal M}
\newcommand{\Ac}{\mathcal O^{k+\alpha}(\overline{\mathbb D},\C^{n}\times\C^{n-1})}
\newcommand{\Acc}{\mathcal O^{k-1+\alpha}(\overline{\mathbb D},\C)}
\newcommand{\Acr}{\mathcal O^{k+\alpha}(\overline{\mathbb D},\R^{n})}
\newcommand{\Co}{\mathcal C}
\newcommand{\Aut}{{\sf Aut}(\mathbb D)}
\newcommand{\D}{\mathbb D}
\newcommand{\oD}{\overline{\mathbb D}}
\newcommand{\oX}{\overline{X}}
\newcommand{\loc}{L^1_{\rm{loc}}}
\newcommand{\la}{\langle}
\newcommand{\ra}{\rangle}
\newcommand{\thh}{\tilde{h}}
\newcommand{\N}{\mathbb N}
\newcommand{\kd}{\kappa_D}
\newcommand{\Hr}{\mathbb H}
\newcommand{\ps}{{\sf Psh}}
\newcommand{\Hess}{{\sf Hess}}
\newcommand{\subh}{{\sf subh}}
\newcommand{\harm}{{\sf harm}}
\newcommand{\ph}{{\sf Ph}}
\newcommand{\tl}{\tilde{\lambda}}
\newcommand{\gdot}{\stackrel{\cdot}{g}}
\newcommand{\gddot}{\stackrel{\cdot\cdot}{g}}
\newcommand{\fdot}{\stackrel{\cdot}{f}}
\newcommand{\fddot}{\stackrel{\cdot\cdot}{f}}

\def\v{\varphi}
\def\Re{{\rm Re}\,}
\def\Im{{\rm Im}\,}
\def\ext{{\sf ext}\,}
\def\Hol{{\rm Hol}}
\def\Lr{{\sf Lr}\,}
\def\tr{{\sf tr}\,}
\def\sp{{\rm sp}}
\def\rg{{\sf rg}\,}

\def\Label#1{\label{#1}}


\def\cn{{\C^n}}
\def\cnn{{\C^{n'}}}
\def\ocn{\2{\C^n}}
\def\ocnn{\2{\C^{n'}}}
\def\je{{\6J}}
\def\jep{{\6J}_{p,p'}}
\def\th{\tilde{h}}


\def\dist{{\rm dist}}
\def\const{{\rm const}}
\def\Span{{\rm Span\,}}
\def\rk{{\rm rank\,}}
\def\dim{{\rm dim}}
\def\id{{\rm id}}
\def\aut{{\sf aut}}
\def\Aut{{\sf Aut}}
\def\CR{{\rm CR}}
\def\GL{{\sf GL}}
\def\U{{\sf U}}
\def\la{\langle}
\def\ra{\rangle}

\emergencystretch15pt \frenchspacing

\newtheorem{theorem}{Theorem}[section]
\newtheorem*{theorem**}{Theorem \mynumber}
\newenvironment{theorem*}[1]
  {\newcommand{\mynumber}{\ref{#1}}\begin{theorem**}}
  {\end{theorem**}}
\newtheorem{lemma}[theorem]{Lemma}
\newtheorem{proposition}[theorem]{Proposition}
\newtheorem{problem}[theorem]{Problem}
\newtheorem{corollary}[theorem]{Corollary}
\newtheorem{conjecture}[theorem]{Conjecture}

\theoremstyle{definition}
\newtheorem{definition}[theorem]{Definition}
\newtheorem{example}[theorem]{Example}

\theoremstyle{remark}
\newtheorem{remark}[theorem]{Remark}
\numberwithin{equation}{section}

\title[Basins of attraction in Loewner equations]{Basins of attraction in Loewner equations}

\author[L. Arosio]{Leandro Arosio$^{\ddagger}$}
\address{Istituto Nazionale di Alta Matematica ``Francesco Severi'', Citt\`a Universitaria, Piazzale Aldo Moro 5, 00185 Rome, Italy}
\email{arosio@altamatematica.it}
\thanks{$^{\ddagger}$Titolare di una Borsa della Fondazione Roma - Terzo Settore  bandita dall'Istituto Nazionale di Alta Matematica}

\date{\today }
\subjclass[2000]{Primary 32H50; Secondary 32H02, 37F99}
\keywords{Loewner chains in several variables; Loewner equations; Evolution families; Abstract basins of attraction}

\begin{abstract}
We prove that any Loewner PDE whose driving term $h(z,t)$ vanishes at the origin, and satisfies the bunching condition $\ell m(Dh(0,t))\geq k(Dh(0,t))$ for some $\ell\in \mathbb{R}^+$, admits a  solution  given by univalent mappings $(f_t\colon \B^q\to\C^q)_{t\geq 0}$. This is done by  discretizing time and considering the abstract basin of attraction. If $\ell<2$, then the range $\cup_{t\geq 0} f_t(\B^q)$ of any such solution is biholomorphic to $\C^q$.

\end{abstract}
\maketitle
\section{Introduction}
Let $\B^q\subset \C^q$ denote the unit ball.
The Loewner PDE 
\begin{equation}\label{PDE} 
\frac{\partial f_t(z)}{\partial t}=Df_t(z)h(z,t),\quad \mbox{a.e.}\ t\geq 0,\ z\in \B^q
\end{equation} 
was introduced by Loewner \cite{Loewner}  and developed by Kufarev \cite{Kufarev} and Pommerenke \cite{Pommerenke} in  the case of the unit disc $\mathbb{D}\doteq \B^1$.
The study of this equation culminated with the proof of the Bieberbach conjecture by de Branges \cite{deBranges} and the introduction of the stochastic Loewner evolution by Schramm \cite{Schramm}.

 The several variables case has been widely studied for its application in geometric function theory by Graham, Hamada,  G.Kohr, M.Kohr, Pfaltzgraff and others (see e.g. \cite{Graham-Kohr} \cite{Pfaltzgraff}).

In \cite{Arosio-Bracci-Hamada-Kohr}, generalizing the results obtained in the unit disc $\mathbb{D}$  in \cite{Contreras-Diaz-Gumenyuk}, we explore the connections between  this topic and the theory recently developed by Bracci, Contreras and D\'iaz-Madrigal  \cite{Bracci-Contreras-Diaz} \cite{Bracci-Contreras-Diaz-II} (see also \cite{Arosio-Bracci}) of Herglotz non-autonomous vector fields on complete hyperbolic manifolds. 
An {\sl Herglotz vector field of order $\infty$} on $\B^q$ is  a non-autonomous holomorphic vector field $-h(z,t)\colon \B^q\times\R^+\to\C^q$ such that 
\begin{itemize}
\item $-h(z,t)$ is measurable in $t\geq 0$ and for a.e. $\bar t\geq 0$, the holomorphic vector field $-h(z,\bar t)$ is an {\sl infinitesimal generator}, that is
the ``frozen'' Cauchy problem
$$
\begin{cases}
 \overset{\bullet}{z}(s)=-h(z(s),\bar t),\\
z(0)=z_0,
\end{cases}
$$
has a solution $z:[0,+\infty)\to \B^q$ for all $z_0\in \B^q$,
\item for any compact set $K\subset \B^q$ and any $T>0$ there exists  $c_{K,T}>0$ satisfying  $$|h(z,t)|\leq c_{K,T},\quad z\in K, 0\leq t\leq T.$$
\end{itemize}
The solution flow  of the Loewner ODE
\begin{equation}
\begin{cases}
\frac{\de}{\de t} \v_{s,t}(z)=-h(\v_{s,t}(z),t),\quad z\in\B^q,\ \mbox{a.e.}\ t\in [s,\infty),\\
 \v_{s,s}(z)=z ,\quad z\in\B^q, s\geq 0,
\end{cases}
\end{equation}
is an {\sl evolution family of order $\infty$}, that is a family of holomorphic mappings $(\v_{s,t}\colon \B^q\to\B^q)_{0\leq s\leq t}$ satisfying
\begin{itemize}
\item $\v_{s,t}=\v_{u,t}\circ\v_{s,u}$ for all $0\leq s\leq u\leq t$ and $\v_{s,s}(z)=z$ for all $s\geq 0$,
\item  for any compact set $K\subset \B^q$ and for any $T>0$ there exists a  $C_{K,T}>0$ satisfying 
  \begin{equation}\label{ck-evd}
|\v_{s,t}(z)- \v_{s,u}(z)|\leq C_{K,T}(t-u), \quad z\in K,\  0\leq s\leq u\leq t<T.
  \end{equation}
\end{itemize}

In \cite{Arosio-Bracci-Hamada-Kohr} we prove that a family $(f_t\colon \B^q\to\C^q)_{t\geq 0}$ of univalent mappings is locally Lipshitz (in the variable $t$) and solves the Loewner PDE  (\ref{PDE}) if and only if it solves  
 the functional equation
\begin{equation}\label{functional}
f_s=f_t\circ \v_{s,t},\quad 0\leq s\leq t.
\end{equation}
If such a solution   $(f_t\colon \B^q\to\C^q)$ exists,
then the subset $\bigcup_{t\geq 0} f_t(\B^q)\subset \C^q$ is open and connected and is called the {\sl range} of $(f_t)$. 
Any other solution $(g_t\colon \B^q\to \C^q)$ is of the form $(\Lambda\circ f_t)$, where $\Lambda\colon \bigcup_{t\geq 0} f_t(\B^q)\to \C^q$ is holomorphic. Thus the ranges of two univalent solutions of  (\ref{PDE}) are biholomorphic.

We are interested in  Herglotz vector fields on $\B^q$ whose flow $(\v_{s,t})$ is  attracting in the origin. A first example is provided by Herglotz vector fields whose linear part does not depend on $t\geq 0$. This has been studied in \cite{Duren-Graham-Hamada-Kohr}\cite{Graham-Hamada-Kohr-Kohr}. 
\begin{theorem}\label{exkohr}
Let $-h(z,t)$ be a Herglotz vector field of order $\infty$ on $\B^q$ such that $h(z,t)=Az+O(|z|^2)$ with 
\begin{equation}\label{condition}
2\min\{\Re\langle Az,z\rangle:|z|=1\}>\max \{ \Re\lambda :\lambda \in \sp(A)\},
\end{equation} where $\langle\cdot,\cdot\rangle$ is the hermitian product on $\C^q$.
Then the Loewner PDE (\ref{PDE}) admits a locally Lipshitz  univalent solution $(f_t\colon \B^q\to\C^q)$. The range $\bigcup_{t\geq 0} f_t(\B^q)$ of any such solution is biholomorphic to $\C^q$.
\end{theorem}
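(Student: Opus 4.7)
The plan is to implement the strategy sketched in the abstract: discretize time, realize the solution on an abstract basin of attraction, interpolate in $t$, and finally identify the basin with $\C^q$ via a Fatou--Bieberbach argument. Throughout, set $m(A):=\min\{\Re\la Az,z\ra:|z|=1\}$ and $k(A):=\max\{\Re\lambda:\lambda\in\sp(A)\}$, so that (\ref{condition}) reads $k(A)<2m(A)$. Positivity of $m(A)$ forces $\Re\sp(A)>0$ and, via a standard Gronwall argument on $\frac{d}{dt}|e^{-tA}z|^2$, gives $\|e^{-tA}\|\leq e^{-m(A)t}$ in the operator norm for every $t\geq 0$.

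Setting $\phi_n:=\v_{n-1,n}:\B^q\to\B^q$ for $n\geq 1$, the hypothesis $h(0,t)\equiv 0$ forces $\phi_n(0)=0$, and differentiating the Loewner ODE at the origin yields $D\phi_n(0)=e^{-A}$, so $\|D\phi_n(0)\|\leq e^{-m(A)}<1$ uniformly in $n$. Form the abstract basin of attraction $\Omega$ as the direct limit of the system $\B^q\xrightarrow{\phi_1}\B^q\xrightarrow{\phi_2}\B^q\xrightarrow{\phi_3}\cdots$; by the standard construction in the uniformly attracting case, $\Omega$ is a connected $q$-dimensional complex manifold equipped with injective holomorphic maps $\iota_n:\B^q\hookrightarrow\Omega$ satisfying $\iota_n=\iota_{n+1}\circ\phi_{n+1}$ and $\Omega=\bigcup_n\iota_n(\B^q)$. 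Put $f_n:=\iota_n$ and, for $t\in(n-1,n]$, $f_t:=\iota_n\circ\v_{t,n}$. The cocycle identity for $(\v_{s,t})$ together with the tower relation among the $\iota_n$ yields the functional equation $f_s=f_t\circ\v_{s,t}$ for all $0\leq s\leq t$. Univalence of each $f_t$ is immediate as a composition of injective holomorphic maps, and local Lipschitz continuity in $t$ follows from (\ref{ck-evd}) combined with the cocycle identity (writing $\v_{u,n}(z)=\v_{t,n}(\v_{u,t}(z))$ and exploiting local Lipschitzness of $\v_{t,n}$ and $\iota_n$). By the equivalence of (\ref{PDE}) and (\ref{functional}) recorded above, $(f_t)$ solves the Loewner PDE.

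To identify $\Omega$ with $\C^q$, define $\Lambda_n:=e^{nA}\circ\iota_n^{-1}$ on $\iota_n(\B^q)$; this is a biholomorphism onto $e^{nA}(\B^q)$. Fix $w\in\Omega$ and choose $n_0$ with $w\in\iota_{n_0}(\B^q)$; for $n\geq n_0$ let $z_n:=\iota_n^{-1}(w)$, so that $z_{n+1}=\phi_{n+1}(z_n)$ and $|z_n|\leq C\rho^n$ for some $\rho<1$ arbitrarily close to $e^{-m(A)}$ (from local contraction at the origin combined with the uniform $C^2$ bound on $\phi_n$ coming from Cauchy estimates applied to the bounded maps $\phi_n:\B^q\to\B^q$). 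Writing $\phi_{n+1}(z)=e^{-A}z+\psi_{n+1}(z)$ with $\|\psi_{n+1}(z)\|\leq K|z|^2$ uniformly in $n$ (same Cauchy estimate), the telescoping
\begin{equation*}
\|\Lambda_{n+1}(w)-\Lambda_n(w)\|=\|e^{(n+1)A}\psi_{n+1}(z_n)\|\leq C'\,e^{n(k(A)-2m(A)+\varepsilon)}
\end{equation*}
is summable in $n$ for $\varepsilon$ sufficiently small, by (\ref{condition}). Hence $\Lambda:=\lim_n\Lambda_n$ exists locally uniformly on $\Omega$ and is holomorphic; its differential at the base point $p:=\iota_{n_0}(0)$ equals $e^{n_0A}\cdot D\iota_{n_0}^{-1}(p)$ and is therefore invertible. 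Injectivity of $\Lambda$ on each $\iota_{n_0}(\B^q)$ follows from Hurwitz applied to the sequence $\Lambda_n|_{\iota_{n_0}(\B^q)}$ of injective holomorphic maps with non-constant limit, and gluing gives global injectivity. A standard Fatou--Bieberbach argument, using that $\bigcup_n e^{nA}(\B^q)=\C^q$ and that $\Lambda^{-1}(v)=\lim_n\iota_n(e^{-nA}v)$ is well-defined on all of $\C^q$, upgrades $\Lambda$ to a biholomorphism $\Omega\to\C^q$.

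The main obstacle is the telescoping estimate in the third step: it requires three ingredients to line up tightly, namely the growth $\|e^{nA}\|\leq C\,e^{nk(A)+n\varepsilon}$, the exponential contraction $|z_n|\leq C\rho^n$ at a rate arbitrarily close to $m(A)$, and the uniform quadratic bound $\|\psi_n(z)\|\leq K|z|^2$. The bunching hypothesis (\ref{condition}) is precisely the inequality forcing the resulting geometric factor $e^{n(k(A)-2m(A))}$ to be summable; this is what separates the Fatou--Bieberbach case from the general one and allows the abstract basin $\Omega$ to be modelled on $\C^q$.
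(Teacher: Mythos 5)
Your proof follows the same broad skeleton as the paper's proof of the general Theorem~\ref{main} (discretize, form the abstract basin of attraction, interpolate via the evolution family, then biholomorphically identify the basin with $\C^q$), and the first two stages — the tower relation, the construction of $\Omega$, the interpolation $f_t=\iota_n\circ\v_{t,n}$ together with the verification of $f_s=f_t\circ\v_{s,t}$, and the appeal to the equivalence of (\ref{PDE}) and (\ref{functional}) — are essentially identical to what the paper does. The third stage, however, is a genuinely different route. The paper does not produce the biholomorphism $\Omega\to\C^q$ by hand: it invokes Fornaess--Stens\o nes (\cite[Theorem 2.2]{Fornaess-Stensones}) to embed $\Omega$ univalently in $\C^q$, then approximates each $\v_{u_n,u_{n+1}}$ by a polynomial automorphism to order $h$, and finally applies Wold's theorem \cite[Theorem 4]{Wold}. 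You instead run a direct Rosay--Rudin-type linearization $\Lambda=\lim_n e^{nA}\circ\iota_n^{-1}$ and claim the telescoping sum converges under the bunching condition. This difference is not cosmetic: Wold's theorem needs $\mu^2<\nu$, which in the constant-$A$ case amounts to the numerical-range inequality $2m(A)>k(A)$, whereas Theorem~\ref{exkohr} only assumes the weaker spectral inequality $2m(A)>\max\Re\sp(A)$. So the paper's machinery, as written, does not specialize to give Theorem~\ref{exkohr} in full strength — the paper explicitly attributes it to \cite{Duren-Graham-Hamada-Kohr}\cite{Graham-Hamada-Kohr-Kohr} rather than proving it — and your direct conjugation is in fact the natural way to exploit the sharper spectral hypothesis. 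Your telescoping estimate $\|\Lambda_{n+1}(w)-\Lambda_n(w)\|\lesssim e^{n(k_{\sp}(A)+\varepsilon-2c(r)m(A))}$ is correctly set up, and the injectivity of $\Lambda$ via Hurwitz is fine.

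The gap is the last sentence of the third paragraph. You assert that ``a standard Fatou--Bieberbach argument'' shows $\Lambda$ is onto, invoking $\bigcup_n e^{nA}(\B^q)=\C^q$ and the formula $\Lambda^{-1}(v)=\lim_n\iota_n(e^{-nA}v)$, but this is precisely the delicate step, and it does not follow from the estimates you have established. To show $v\in\Lambda(\Omega)$ one must show that $w_n:=\iota_n(e^{-nA}v)$ stays in a compact subset of $\Omega$, i.e.\ that $\v_{n_0,n}^{-1}(e^{-nA}v)$ is defined and bounded away from $\partial\B^q$. The only lower bound you have on $\v_{n_0,n}(\B^q)$ is that it contains a round ball of radius $\gtrsim e^{-(n-n_0)C(r)k(A)}$, where $k(A)$ here is the numerical-range maximum from the Gurganus estimate; comparing this with $|e^{-nA}v|\lesssim e^{-n(m_{\sp}(A)-\varepsilon)}|v|$ requires $m_{\sp}(A)>C(r)k(A)$, which in general fails. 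What actually saves the argument under the spectral hypothesis is that $\v_{n_0,n}(\B^q)$ is not a round ball but close to the ellipsoid $e^{-(n-n_0)A}(r\B^q)$, which stretches anisotropically exactly as $e^{-nA}v$ contracts; quantifying this requires a separate induction (tracking the full matrix $e^{(n-n_0)A}\v_{n_0,n}$ near the origin, not just its operator norm), and it is essentially a second telescoping argument of the same strength as the one you already ran for $\Lambda_n$. You do not carry this out, nor cite a result that does, so as written the surjectivity — and hence the conclusion that the range is all of $\C^q$ rather than merely a Fatou--Bieberbach-type domain — is unproven.
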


This result was generalized in  \cite{Arosio-II} (see also \cite{Arosio}), with an approach based on a discretization of time.
\begin{theorem}\label{exmain}
Let $-h(z,t)$ be a Herglotz vector field of order $\infty$ on $\B^q$ such that $h(z,t)=Az+O(|z|^2)$, where the eigenvalues of $A$ have strictly positive real part.
Then the Loewner PDE (\ref{PDE}) admits a locally Lipshitz  univalent solution $(f_t\colon \B^q\to\C^q)$. The range $\bigcup_{t\geq 0} f_t(\B^q)$ of any such solution  is biholomorphic to $\C^q$. 
\end{theorem}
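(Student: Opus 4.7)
The plan is to follow the discretization-of-time strategy indicated in the introduction. Fix a small $\delta > 0$, set $t_n := n\delta$, and let $F_n := \v_{t_n, t_{n+1}}$, so that $(F_n)$ is a sequence of univalent self-maps of $\B^q$ fixing $0$. A short computation from the Loewner ODE shows that $DF_n(0)$ is close to $\exp(-\delta A)$ uniformly in $n$; since $\sp(A)$ lies in the right half plane, by taking $\delta$ small one may assume that the eigenvalues of $DF_n(0)$ lie in a common annulus $\{c' \leq |w| \leq c\} \subset (0,1)$, uniformly in $n$. In particular each $F_n$ is a uniform contraction on some $r\B^q$ with $r>0$ independent of $n$.

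I then construct the abstract basin of attraction $\Omega$ associated to $(F_n)$ as the complex manifold obtained as the direct limit of the inductive system $\B^q \xrightarrow{F_0} \B^q \xrightarrow{F_1} \cdots$. This produces holomorphic injections $g_n \colon \B^q \to \Omega$ satisfying $g_n = g_{n+1} \circ F_n$, and realizes $\Omega$ as the increasing union $\bigcup_n g_n(\B^q)$. For each $t \geq 0$ I define $f_t \colon \B^q \to \Omega$ by choosing any $n$ with $t_n \geq t$ and setting $f_t := g_n \circ \v_{t, t_n}$. Consistency of this definition is immediate from $g_n = g_{n+1} \circ F_n$ together with the semigroup identity $\v_{t, t_{n+1}} = F_n \circ \v_{t, t_n}$. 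The functional equation $f_s = f_t \circ \v_{s, t}$ and the univalence of each $f_t$ follow in the same way, while the local Lipschitz estimate is a consequence of (\ref{ck-evd}) and the local boundedness of the $g_n$. By the equivalence (\ref{functional}), $(f_t)$ will be a locally Lipschitz univalent solution of the Loewner PDE as soon as $\Omega$ is identified with an open subset of $\C^q$.

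The main obstacle is to show that $\Omega$ is biholomorphic to $\C^q$. The uniform contraction of the $F_n$ on $r\B^q$ provides an exhaustion of $\Omega$ by nested copies $g_n(r\B^q)$ of the ball, but this alone does not force $\Omega$ to be $\C^q$. The strategy is to construct a non-autonomous holomorphic change of coordinates conjugating each $F_n$ to its linear part (or, when resonances are present, to a suitable Poincar\'e--Dulac normal form), in a uniform manner compatible with the inductive system. The positivity of $\Re \lambda$ for every $\lambda \in \sp(A)$ places the dynamics squarely in the Poincar\'e domain, which is precisely the ingredient needed for such a uniform normalization to exist; this step is the technical heart of the argument. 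The abstract basin of the resulting linear (or polynomial normal-form) inductive system is manifestly $\C^q$, and transporting this identification back yields $\Omega \cong \C^q$. Once this is established, the statement for an arbitrary univalent solution follows from the remark in the introduction that all univalent solutions of (\ref{PDE}) have biholomorphic ranges.
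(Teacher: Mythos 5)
Your strategy --- discretize at $t_n=n\delta$, form the abstract basin of attraction $\Omega$ of $F_n:=\v_{t_n,t_{n+1}}$, recover a Loewner chain from the direct-limit injections via $f_t:=g_n\circ\v_{t,t_n}$, and identify $\Omega$ with $\C^q$ by a non-autonomous Poincar\'e--Dulac normalization --- is the approach of \cite{Arosio} and \cite{Arosio-II}, which is where Theorem \ref{exmain} is actually proved. It is genuinely different from the route taken for Theorem \ref{main} in the present paper, which avoids normal forms: there the bunching hypothesis with $\ell<2$ produces a two-sided estimate $\nu|z|\le|\v_{u_n,u_{n+1}}(z)|\le\mu|z|$ with $\mu^2<\nu$, one replaces each $\v_{u_n,u_{n+1}}$ by an automorphism of $\C^q$ agreeing to second order following \cite{Abate-Abbondandolo-Majer}, and Wold's theorem \cite{Wold} then gives $\mathfrak{A}\cong\C^q$. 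That shortcut is unavailable under your hypothesis, since positivity of the real parts of the eigenvalues of $A$ does not imply $m(A)>0$, much less any bunching; so you are right that a normal form argument is forced.

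The genuine gap is that the step you label the technical heart is only named, and it \emph{is} the theorem. Two separate nontrivial facts are hidden in that sentence. First, one needs a \emph{uniform} non-autonomous Poincar\'e--Dulac theorem: tangent-to-identity biholomorphisms $\sigma_n$ defined on a common $r\B^q$ with bounds uniform in $n$, conjugating $(F_n)$ to a sequence $(N_n)$ of resonant polynomial normal forms compatibly with the inductive system. This requires not only the Poincar\'e-domain hypothesis but also a uniform-in-$n$ bound $F_n(z)-e^{-\delta A}z=O(|z|^2)$, which does not follow from the definition of a Herglotz vector field of order $\infty$ alone and must be extracted, e.g.\ from the estimate $|h(z,t)|\le\frac{4r}{(1-r)^2}\|A\|$ used in the proof of Theorem \ref{main}. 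Second, the abstract basin of the normal-form sequence $(N_n)$ must be shown to be biholomorphic to $\C^q$; this is true because the $N_n$ are polynomial automorphisms, triangular in the resonance filtration, with uniformly bounded coefficients and constant attracting linear part $e^{-\delta A}$ --- but it is not ``manifest,'' since without the triangular structure it is precisely Bedford's conjecture, stated as open at the end of Section 1. Two small corrections as well: differentiating the Loewner ODE at $z=0$ gives $DF_n(0)=e^{-\delta A}$ \emph{exactly}, not merely approximately; and ``uniform contraction on $r\B^q$'' must be understood after replacing the Euclidean norm by an equivalent Hermitian norm in which $\|e^{-\delta A}\|<1$, since in the Euclidean norm this can fail when $m(A)\le 0$.
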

The same result was obtained independently  with  different methods by Voda \cite{Voda}, assuming $\min\{\Re\langle Az,z\rangle:|z|=1\}>0$. See also \cite{Hamada} for related results.

The next natural step is admitting time-dependent linear parts. Set $$m(A)\doteq  \min\{\Re\langle Az,z\rangle:|z|=1\},\quad k(A)=\max\{\Re\langle Az,z\rangle:|z|=1\}.$$ The following result is proved in  \cite{annali}\cite{CAOT}:
\begin{theorem}\label{gabicaot}
Let $-h(z,t)$ be a Herglotz vector field on $\B^q$ of order $\infty$ such that $h(z,t)=A(t)z+O(|z|^2)$, and assume that the family of linear mappings $(A(t))_{t\geq 0}$ satisfies:
\begin{itemize}
\item[i)] $m(A(t))>0$ for all $t\geq 0$ and $\int_0^\infty m(A(t))=\infty$,
\item[ii)] $t\mapsto \|A(t)\|$ is uniformly bounded on $\R^+$,
\item[iii)] there exists $\delta>0$ such that $$2m(A(t))\geq k(A(t))+\delta,\quad t\geq 0,$$
\item[iv)] $$\int_{s}^tA(\tau)d\tau\circ \int_{r}^sA(\tau)d\tau =\int_{r}^sA(\tau)d\tau\circ \int_{s}^tA(\tau)d\tau,\quad t\geq s\geq r\geq 0.$$
\end{itemize}
Then the Loewner PDE (\ref{PDE}) admits a locally Lipshitz univalent solution $(f_t\colon \B^q\to\C^q)$. The range $\bigcup_{t\geq 0} f_t(\B^q)$ of any such solution  is biholomorphic to $\C^q$.
\end{theorem}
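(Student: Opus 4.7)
The plan is to reduce the continuous-time problem to a discrete one by time sampling and then apply an abstract basin of attraction construction. First, I would set $\psi_n := \v_{n,n+1}$ for $n \in \N$: these are univalent self-maps of $\B^q$ fixing the origin, whose differentials $L_n := D\psi_n(0) = \exp(-\int_n^{n+1} A(\tau)\,d\tau)$ all commute by hypothesis (iv), and so can be simultaneously upper-triangularized over $\C$. Conditions (i)--(iii) then give uniform control on their spectra: each $L_n$ is strictly contracting, and for any pair of eigenvalues $\mu,\mu'$ of $L_n$ the bunching $2m(A(t))\geq k(A(t))+\delta$ yields $|\mu|^2 \leq e^{-\delta}|\mu'|$, which rules out resonances between degree-two and degree-one monomials uniformly in $n$.

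Next, I would form the abstract basin of attraction
\[
\Omega_\infty := \varinjlim \bigl(\B^q \xrightarrow{\psi_0} \B^q \xrightarrow{\psi_1} \B^q \xrightarrow{\psi_2} \cdots\bigr),
\]
which inherits the structure of a $q$-dimensional complex manifold with holomorphic open embeddings $\iota_n\colon \B^q \to \Omega_\infty$ satisfying $\iota_n = \iota_{n+1}\circ \psi_n$ (here injectivity of the $\iota_n$ uses univalence of each $\psi_n$, and Hausdorffness follows from the uniform contraction provided by (i)). Interpolating to continuous time by $f_t := \iota_{n+1}\circ \v_{t,n+1}$ for $t\in[n,n+1]$ produces a locally Lipschitz family of univalent maps $f_t\colon \B^q\to \Omega_\infty$ satisfying the functional equation $f_s = f_t\circ \v_{s,t}$. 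Transporting along any biholomorphism $\Omega_\infty \cong \C^q$ and appealing to the PDE / functional-equation equivalence recalled above then yields the desired univalent solution of~(\ref{PDE}).

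The third and main step is to verify $\Omega_\infty \cong \C^q$. Working in a basis in which the $L_n$ are simultaneously upper-triangular, I would inductively build polynomial coordinate changes $h_n\colon(\C^q,0)\to(\C^q,0)$ that normalize each $\psi_n$ up to higher and higher order; the uniform non-resonance established above guarantees that the cohomological equations for the corrections are solvable with controlled norms, and the uniform bound (ii) on $\|A(t)\|$ bounds the nonlinear part of $\psi_n$ independently of $n$. The composites $H_n := h_n\circ \iota_n^{-1}$, defined on $\iota_n(\B^q)\subset \Omega_\infty$, should then converge locally uniformly to a biholomorphism $\Omega_\infty \to \C^q$; the divergence of $\int_0^\infty m(A(\tau))\,d\tau$ in~(i) ensures $\Omega_\infty = \bigcup_n \iota_n(\B^q)$, so the limit is globally defined.

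The hardest part will be the convergence of the sequence $(H_n)$. This requires balancing two competing effects: the contraction rate coming from $m(A(t))$, which shrinks iterates of the $\psi_n$ toward the origin, against the growth of the nonlinear error terms under successive conjugations, estimated via Schwarz/Cauchy bounds on $\B^q$ and the uniform spectral gap furnished by (ii)--(iii). The time-uniform bunching gap $\delta$ should yield geometric convergence on compacta of $\Omega_\infty$; injectivity of the limiting map follows from the inductive construction, and surjectivity onto $\C^q$ from the fact that after conjugation the maps approach their (jointly triangular, strictly contracting) linear parts, whose inductive limit is standardly $\C^q$.
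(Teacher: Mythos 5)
Your high-level plan matches the paper's: discretize time, form the abstract basin of attraction $\Omega$ of the resulting sequence of univalent self-maps, identify $\Omega$ with a domain in $\C^q$, pull back to define $f_t$ by the functional equation, and invoke the PDE/functional-equation equivalence of \cite{Arosio-Bracci-Hamada-Kohr}. Two remarks on the discretization: first, the paper samples at times $u_n$ defined by $\int_0^{u_n}m(A(\tau))\,d\tau=n$ rather than at integers — under hypothesis (iii) this is optional since $2m\ge k+\delta\ge m+\delta$ forces $m(A(t))\ge\delta$, so your integer sampling also gives uniform contraction, but the reparametrization is what lets the paper drop the uniform lower bound on $m$ in the more general Theorem~\ref{mainintro}. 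Second, the pointwise estimate $\nu|z|\le|\v_{n,n+1}(z)|\le\mu|z|$ with $\mu^2<\nu$ only holds after restricting to a small ball $r\B^q$: the Gurganus inequality produces the factors $C(|z|)=\frac{1+|z|}{1-|z|}$ and $c(|z|)=\frac{1-|z|}{1+|z|}$, so the gap $\mu^2<\nu$ requires $C(r)^2\cdot\frac{k}{m}<2$, forcing $r$ small. You define the inductive limit over the full ball $\B^q$ without addressing this.

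The genuine gap is in your third step. You propose to build the biholomorphism $\Omega\to\C^q$ by hand via inductively chosen polynomial coordinate changes $h_n$, solving non-autonomous cohomological equations, and then arguing that the conjugated maps ``approach their linear parts, whose inductive limit is standardly $\C^q$.'' This is not a proof sketch so much as a restatement of the difficulty: producing a \emph{globally defined, surjective} limit of the $H_n$ is exactly the content of the theorems of Fornaess--Stens\o nes \cite{Fornaess-Stensones} and Wold \cite{Wold}, and it is precisely what is open in Bedford's conjecture without the spectral-gap hypothesis. The paper does not attempt this directly. It (a) cites \cite[Theorem 2.2]{Fornaess-Stensones} to obtain merely a univalent $\Psi\colon\Omega\to\C^q$ (no surjectivity claimed), (b) replaces each $\v_{u_n,u_{n+1}}$ by an automorphism $\Phi_{n,n+1}$ of $\C^q$ agreeing with it to order $h-1$ (here $h=2$ since $\ell<2$), using Forstneri\v{c}/Weickert interpolation, and invokes \cite[Remark A.4]{Abate-Abbondandolo-Majer} to get a biholomorphism $\Omega\to\mathfrak{A}(\Phi_{n,n+1})$, and then (c) applies \cite[Theorem 4]{Wold} to conclude $\mathfrak{A}(\Phi_{n,n+1})\cong\C^q$ because $h=2$. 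Without these ingredients — or a complete rederivation of them — the final isomorphism $\Omega\cong\C^q$ is unjustified. Finally, note that you use hypothesis (iv) (commutativity) to triangularize the $L_n$, but the paper's argument never needs a preferred basis: the Gurganus estimate bounds $|\v_{s,t}(z)|/|z|$ intrinsically, which is what allows the paper's Theorem~\ref{mainintro} to drop (iv) entirely.
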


In this paper we generalize Theorem \ref{gabicaot}, using the approach of \cite{Arosio}\cite{Arosio-II}. The following is our result.
\begin{theorem}\label{mainintro}
Let $-h(z,t)$ be a Herglotz vector field on $\B^q$ of order $\infty$ such that $h(z,t)=A(t)z+O(|z|^2)$, and assume that the family of linear mappings $(A(t))_{t\geq 0}$ satisfies:
\begin{itemize}
\item[a)] $m(A(t))>0$ for all $t\geq 0$ and $\int_0^\infty m(A(t))=\infty$,
\item[b)] $t\mapsto \|A(t)\|$ is locally bounded on $\R^+$,
\item[c)] there exists $\ell\in \mathbb{R}^+$ such that $$\ell m(A(t))\geq k(A(t)),\quad t\geq 0.$$
\end{itemize}
Then the Loewner PDE (\ref{PDE}) admits a locally Lipshitz univalent solution $(f_t\colon \B^q\to\C^q)$.  If $\ell<2$ then   the range $\bigcup_{t\geq 0} f_t(\B^q)$ of any such solution is biholomorphic to $\C^q$. 
\end{theorem}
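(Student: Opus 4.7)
The plan is to follow the time-discretization approach of \cite{Arosio}\cite{Arosio-II}. Using (a) and (b), pick an increasing sequence $0=t_0<t_1<\cdots$ with $t_n\to\infty$ such that each discrete evolution $g_n\doteq\v_{t_{n-1},t_n}\colon\B^q\to\B^q$ is univalent, fixes the origin, and has linear part $A_n\doteq Dg_n(0)$ of norm $<1$; hypothesis (a) makes the cumulative linear cocycle $L_m\doteq A_m\cdots A_1$ contract to $0$, while (c) passes down to a uniform bunching estimate $\ell\,m(A_n)\geq k(A_n)$. Form the abstract basin of attraction
$$\Omega\;\doteq\;\varinjlim_n(\B^q,g_n),$$
a complex manifold carrying open holomorphic embeddings $\iota_n\colon\B^q\to\Omega$ with $\iota_n=\iota_{n+1}\circ g_{n+1}$; the semigroup property of $\v_{s,t}$ extends these to a continuous family $\iota_t\colon\B^q\to\Omega$, $t\geq 0$, with $\iota_s=\iota_t\circ\v_{s,t}$.

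\emph{Embedding $\Omega\hookrightarrow\C^q$.} This is the heart of the argument. Applying the discrete-time machinery of \cite{Arosio-II}, the bunching of $(A_n)$ produces a univalent holomorphic $F\colon\Omega\to\C^q$; concretely one constructs univalent maps $F_n\colon\B^q\to\C^q$ satisfying $F_n=F_{n+1}\circ g_{n+1}$ as a limit
$$F_n(z)\;=\;\lim_{m\to\infty}L_m^{-1}\bigl(g_m\circ\cdots\circ g_{n+1}(z)\bigr),$$
possibly composed with a polynomial prefactor absorbing the resonant monomials of degree $\leq\lfloor\ell\rfloor$. The operator norm of $L_m^{-1}$ grows no faster than $e^{\int_0^{t_m}k(A(\tau))\,d\tau}$, while the degree-$j$ homogeneous part of $g_m\circ\cdots\circ g_{n+1}$ decays like $e^{-j\int_0^{t_m}m(A(\tau))\,d\tau}$, so the bunching $\ell m\geq k$ gives geometric convergence in all degrees $j>\ell$, the finitely many layers $2\leq j\leq\ell$ being handled by a cocycle-wise Poincar\'e--Dulac normal-form step. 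Setting $f_t\doteq F\circ\iota_t$ yields the functional equation (\ref{functional}) with $(\v_{s,t})$; local Lipschitz continuity in $t$ follows from (\ref{ck-evd}) together with continuity of $F$, and the equivalence recalled in the introduction turns this into a univalent solution of the PDE (\ref{PDE}).

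\emph{Range when $\ell<2$.} If $\ell<2$ then $k(A(t))<2m(A(t))$, which rules out \emph{every} resonance $\lambda_j=\sum_i n_i\lambda_i$ with $\sum n_i\geq 2$ among eigenvalues of the cocycle, so no polynomial correction is needed. The same inflation limit, now viewed on every compact set of $\C^q$ after pre-composition with $L_m$, yields biholomorphisms $\psi_n\colon\C^q\to\Omega$ linearizing the basin, $\psi_n=\psi_{n+1}\circ A_{n+1}$. Hence $\Omega\cong\C^q$, and since any two univalent solutions of (\ref{PDE}) have biholomorphic ranges, the range of every such solution is biholomorphic to $\C^q$.

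The main obstacle is the embedding step in the presence of resonances ($\ell\geq 2$). Because the linear parts $A_n$ form a genuinely time-dependent cocycle, a fixed-basis Poincar\'e--Dulac reduction is not available; the polynomial corrections must be chosen level-by-level in the direct limit, compatibly with each $g_{n+1}$ and with the corrections already in place, so that the chain of conjugacies needed for $F_n=F_{n+1}\circ g_{n+1}$ closes up. Verifying convergence of this cocycle-wise normalization under the bunching hypothesis (c) is the technical content inherited from \cite{Arosio-II}.
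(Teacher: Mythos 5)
Your overall strategy --- discretize time, form the abstract basin of attraction $\Omega$ of the discretized evolution family, embed $\Omega$ univalently into $\C^q$, then promote back to a Loewner chain via the theory of \cite{Arosio-Bracci-Hamada-Kohr} --- is exactly the strategy of the paper. The differences are in the embedding step, and there your outline has two genuine gaps.

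First, you assert that hypothesis (c) ``passes down to a uniform bunching estimate'' for the discretized maps, but this is not automatic. Gurganus's inequality relates $\Re\langle h(w,t),w\rangle$ to $\Re\langle A(t)w,w\rangle$ only up to the factors $c(|w|)=\frac{1-|w|}{1+|w|}$ and $C(|w|)=\frac{1+|w|}{1-|w|}$, so the nonlinear contraction estimates one actually gets on $r\B^q$ are $e^{-C(r)\int_s^t k}\le |\v_{s,t}(z)|/|z|\le e^{-c(r)\int_s^t m}$. Consequently the effective bunching ratio for the basin construction is not $\ell$ but $C^2(r)\ell$. The paper's proof hinges on first fixing $h$ to be the least integer $>\ell$, and then \emph{shrinking $r$ so that $C^2(r)\ell<h$}; only after this radius choice does one obtain discrete maps $\v_{u_n,u_{n+1}}\colon r\B^q\to r\B^q$ with $\nu|z|\le|\v_{u_n,u_{n+1}}(z)|\le\mu|z|$ and $\mu^h<\nu$, which is precisely the hypothesis of \cite[Theorem 2.2]{Fornaess-Stensones}. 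Your proposal never confronts this degradation factor, and without it the discrete bunching claim is unjustified.

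Second, the convergence rationale you give for $F_n(z)=\lim_m L_m^{-1}\bigl(g_m\circ\cdots\circ g_{n+1}(z)\bigr)$ is not correct as stated. The degree-$j$ homogeneous part of the long composition $g_m\circ\cdots\circ g_{n+1}$ does \emph{not} decay like $e^{-j\int m}$: by Cauchy estimates on a fixed ball, the degree-$j$ Taylor coefficient is controlled by the sup of the whole map, which decays like the \emph{first} power $e^{-c(r)\int m}$, not the $j$-th. The standard (and correct) argument is telescopic: write $F_{m+1}-F_m=L_{m+1}^{-1}\circ R_{m+1}\circ(g_m\circ\cdots\circ g_{n+1})$, where $R_{m+1}=g_{m+1}-A_{m+1}$, and use that $R_{m+1}(w)=O(|w|^h)$ (after the normal-form step) together with the contraction of the inner composition and the growth bound $\|L_{m+1}^{-1}\|\le e^{\int k}$. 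It is this comparison that produces the exponent $h$, not a per-degree decay of the long composition. The paper sidesteps reproving this altogether by citing \cite[Theorem 2.2]{Fornaess-Stensones} and, for the explicit $\Psi$, \cite[Remark A.4]{Abate-Abbondandolo-Majer} (approximate each $\v_{u_n,u_{n+1}}$ by an automorphism of $\C^q$ to order $h$), and then, for the case $\ell<2$ where $h=2$, invokes \cite[Theorem 4]{Wold} to conclude that the non-autonomous basin is biholomorphic to $\C^q$ --- replacing your ``no resonances'' heuristic with a precise result phrased exactly in terms of the discrete estimates $\mu^2<\nu$.

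In short: the architecture of your proof is the paper's, and you correctly acknowledge at the end that the cocycle-wise normalization is the technical crux; but the missing radius-shrinking step (to control the $C^2(r)$ degradation from Gurganus) and the incorrect per-degree decay claim are real gaps that would need to be repaired, most economically by citing the Fornaess--Stens\o nes, Abate--Abbondandolo--Majer, and Wold results as the paper does.
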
  
Notice that the assumptions ii) and iii) of Theorem \ref{gabicaot} imply  that there exists $\ell<2$ such that  $$\ell m(A(t))\geq k(A(t)),\quad t\geq 0.$$

We want to stress the  strong analogy between Loewner theory and the theory of  discrete  non-autonomous complex dynamical systems  grown around  Bedford's conjecture (see \cite{Abate-Abbondandolo-Majer}\cite{Fornaess-Stensones}\cite{Jonsson-Varolin}\cite{Peters}\cite{Wold}). This is reflected in the proof of Theorem \ref{mainintro}, which is based on a discretization of time, and  relies on the  study of the abstract basin of attraction performed by Fornaess and Stens\o nes in \cite{Fornaess-Stensones}:
\begin{theorem}\label{squall}
Let $(\v_{n,n+1})_{n\in\N}$ be a family of univalent self-mappings of $r\B^q$. Assume that  there exist $0<\nu\leq\mu<1$ such that 
\begin{equation}\label{peter}
\nu|z|\leq |\v_{n,n+1}(z)|\leq \mu|z|,\quad z\in r\B^q, n\in \N.
\end{equation}
Then, if $\Omega$ is the abstract basin of attraction of $(\v_{n,n+1})$, then there exists an univalent mapping $\Psi\colon \Omega\to\C^q$. 
\end{theorem}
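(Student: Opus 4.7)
\emph{Setup and reduction.} The plan is to realize $\Omega$ as the direct limit of the system
\[
r\B^q \xrightarrow{\varphi_{0,1}} r\B^q \xrightarrow{\varphi_{1,2}} r\B^q \xrightarrow{\varphi_{2,3}} \cdots,
\]
a $q$-dimensional complex manifold equipped with canonical holomorphic injections $\iota_n\colon r\B^q\to\Omega$ satisfying $\iota_n=\iota_{n+1}\circ\varphi_{n,n+1}$. Since \eqref{peter} forces $\varphi_{n,n+1}(0)=0$ and each $\varphi_{n,n+1}$ is a strict self-contraction, the subdomains $U_n:=\iota_n(r\B^q)$ form a strictly increasing exhaustion of $\Omega$ sharing the common centre $\iota_n(0)$. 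By the universal property of the direct limit, constructing a univalent holomorphic map $\Psi\colon\Omega\to\C^q$ is equivalent to constructing a family of univalent holomorphic maps $\psi_n:=\Psi\circ\iota_n\colon r\B^q\to\C^q$ satisfying the functional equation $\psi_{n+1}\circ\varphi_{n,n+1}=\psi_n$ for every $n\in\N$.

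\emph{Inductive construction via Andersén--Lempert.} I would build the $\psi_n$ inductively. Starting from $\psi_0:=\id$, assume $\psi_n$ is univalent on $r\B^q$. The would-be restriction $\psi_n\circ\varphi_{n,n+1}^{-1}$ of $\psi_{n+1}$ is defined only on the proper subdomain $\varphi_{n,n+1}(r\B^q)\subset r\B^q$, and the task is to extend it univalently to the whole ball. Since $\varphi_{n,n+1}(r\B^q)$ is a Runge-type subdomain of $\C^q$ (as the univalent image of a ball) and $\psi_n\circ\varphi_{n,n+1}^{-1}$ is isotopic to the identity through holomorphic injections, the Andersén--Lempert theorem (for $q\geq 2$) furnishes automorphisms $\Phi_n\in\Aut(\C^q)$ that approximate $\psi_n\circ\varphi_{n,n+1}^{-1}$ uniformly on compact subsets of $\varphi_{n,n+1}(r\B^q)$ to arbitrary precision. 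Setting $\psi_{n+1}:=\Phi_n|_{r\B^q}$ gives a univalent map with only the approximate compatibility $\psi_{n+1}\circ\varphi_{n,n+1}\approx\psi_n$. The case $q=1$ is handled separately by observing that $\Omega$ is then a simply connected non-compact Riemann surface, and so embeds into $\C$ by uniformization.

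\emph{Main obstacle.} The central difficulty is upgrading approximate compatibility to exact compatibility while preserving univalence in the limit. I would implement a telescoping Cauchy scheme: fix a compact exhaustion $K_0\Subset K_1\Subset\cdots\nearrow r\B^q$ and at step $n$ require the Andersén--Lempert error to be smaller than a prescribed $\varepsilon_n$, with $\sum_n \varepsilon_n\cdot L_n<\infty$, where $L_n$ bounds the Lipschitz constant of the previously constructed data on $K_{n+1}$. The uniform contraction $\nu|z|\leq|\varphi_{n,n+1}(z)|\leq\mu|z|$ with $\mu<1$ is crucial on two counts: it keeps each iterate $\varphi_{n,n+1}(K_n)$ strictly inside $r\B^q$ so that Andersén--Lempert applies, and it damps the propagation of errors through the compositions. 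The sequence $(\psi_n\circ\iota_n^{-1}(p))_{n\gg 0}$ is then Cauchy for each $p\in\Omega$; its limit defines $\Psi(p)$ as a holomorphic map $\Omega\to\C^q$, and a Hurwitz-type argument together with the uniform control on the derivative at the common origin (coming from the lower bound $\nu>0$) yields univalence. The truly delicate point is the quantitative coordination of the tolerances $\varepsilon_n$ with the a priori growing Lipschitz constants $L_n$; it is precisely here that the hypothesis $\mu<1$ plays its decisive role.
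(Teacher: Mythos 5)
The statement is cited in the paper from Fornaess--Stens\o nes, and the construction the paper actually sketches (following Abate--Abbondandolo--Majer, Remark~A.4) is quite different from yours: one chooses automorphisms $\Phi_{n,n+1}$ of $\C^q$ that agree with $\v_{n,n+1}$ to a high \emph{jet order} $h$ at the origin (by the Forstneri\v c--Weickert jet-interpolation theorem, not Anders\'en--Lempert), where $h$ is chosen so that $\mu^{h}<\nu$, and then one shows that the compositions $\Phi_{n,m}^{-1}\circ\v_{n,m}$ converge on a small ball; this gives a biholomorphism of $\Omega$ onto the (genuine) basin of attraction of $(\Phi_{n,n+1})$ in $\C^q$. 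Your proposal replaces this with iterated Anders\'en--Lempert sup-norm approximation of the maps $\psi_n\circ\v_{n,n+1}^{-1}$, which is a genuinely different route, and it has gaps that I do not think can be patched within the scheme you describe.

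First, the Anders\'en--Lempert step is not available as stated. You assert that $\v_{n,n+1}(r\B^q)$ is ``a Runge-type subdomain of $\C^q$ (as the univalent image of a ball)'', but a bounded univalent image of a ball in $\C^q$, $q\ge 2$, need not be Runge in $\C^q$, and Anders\'en--Lempert approximation requires Runge source \emph{and} target together with an isotopy through biholomorphisms of Runge domains; none of this is checked (and the target $\psi_n(r\B^q)$ is being constructed recursively, so its Runge-ness is in no way automatic).

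Second, and more fundamentally, sup-norm smallness of the error $E_n:=\psi_{n+1}\circ\v_{n,n+1}-\psi_n$ is the wrong kind of smallness. The paper's construction exploits a delicate cancellation between the contraction rate and the order of tangency: with $\Phi_{n,n+1}-\v_{n,n+1}=O(|z|^{h})$ and $|\v_{n,m}(z)|\le\mu^{m-n}|z|$, the accumulated error after $m-n$ steps decays like $\mu^{h(m-n)}$, while pre-composing with $\Phi_{n,m}^{-1}$ can expand at rate at worst $\nu^{-(m-n)}$; convergence of the telescoping sum is then exactly the inequality $\mu^{h}<\nu$. This is where the lower bound $\nu>0$ enters essentially. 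In your scheme, the errors $E_n$ are only small in sup norm and have no prescribed vanishing at the origin, so there is no mechanism forcing $|E_m(\v_{n,m}(z))-E_m(\v_{n,m}(z'))|$ to decay faster than the possible loss of injectivity; the ``uniform control on the derivative at the common origin (coming from $\nu>0$)'' that you invoke is not actually produced by your construction. Choosing $\varepsilon_n$ to decay very fast does not help, because the relevant comparison is between the derivative of $E_m$ along the orbit $\v_{n,m}(z)$ and the modulus of injectivity of the already-built $\psi_n$, neither of which you control.

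Third, ``Hurwitz-type'' arguments do not give univalence of a locally uniform limit of univalent maps in several variables: unlike the one-variable situation, the limit can degenerate (for instance, it can have lower rank on a subvariety) even when it is nonconstant. You would need a quantitative lower bound on $|\Psi(p)-\Psi(p')|$ in terms of $|z-z'|$, and that is precisely the estimate the jet-order approach supplies and the sup-norm approach does not. The one-variable branch of your argument (uniformization of the simply connected noncompact surface $\Omega$) is fine, but the case $q\ge 2$ --- the only nontrivial one --- is where the gap lies.
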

The abstract basin of attraction comes naturally  with a family of univalent mappings $(\omega_n\colon r\B^q\to \Omega)$. Composing this family with the biholomorphism $\Psi$ given by Theorem \ref{squall} we obtain a  family of univalent mappings  $r\B$ to $\C^q$  which we extend  to a family $(f_t\colon \B^q\to \C^q)$ satisfying the functional equation (\ref{functional}).

The range $\bigcup_{t\geq 0} f_t(\B^q)$ is by construction biholomorphic to $\Omega$ and thus by  \cite[Theorem 3.1]{Fornaess-Stensones} it  is  a Stein, Runge domain in $\C^q$ whose  Kobayashi pseudometric vanishes identically and which is diffeomorphic to $\C^q$. It is an open question whether $\bigcup_{t\geq 0} f_t(\B^q)$ is  biholomorphic to $\C^q$ when $\ell\geq 2$.  A positive answer would follow from a proof of  Bedford's  conjecture (see e.g. \cite{Petersthesis}):
\begin{conjecture}
Let $(\Phi_{n,n+1})_{n\in\N}$ be a family of automorphisms of $\C^q$. Assume that  there exist $0<\nu\leq\mu<1$ and $r>0$ such that 
\begin{equation}
\nu|z|\leq |\Phi_{n,n+1}(z)|\leq \mu|z|,\quad z\in r\B^q, n\in \N.
\end{equation}
Then the basin of attraction  $$ \{z\in \C^q: \lim_{n\to\infty}\Phi_{n-1,n}\circ \cdots\circ\Phi_{0,1} = 0\}$$ is biholomorphic to $\C^q$.

\end{conjecture}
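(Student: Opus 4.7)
The starting point is to apply Theorem \ref{squall} to the restrictions $(\Phi_{n,n+1}|_{r\B^q})$: this already produces a univalent mapping $\Psi\colon \Omega\to \C^q$ from the abstract basin, realising the basin of attraction as a Stein, Runge domain $D\subset \C^q$ which, by \cite[Theorem~3.1]{Fornaess-Stensones}, is diffeomorphic (but a priori not biholomorphic) to $\C^q$. The task is to upgrade this embedding to the equality $D=\C^q$, exploiting the extra rigidity absent from Theorem \ref{squall}: each $\Phi_{n,n+1}$ is a global automorphism of $\C^q$, so $\Phi_{n,n+1}^{-1}$ is defined everywhere and the forward cocycle $\Phi_{n-1,n}\circ\cdots\circ\Phi_{0,1}$ acts on all of $\C^q$.

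The natural strategy is a non-autonomous Rosay--Rudin argument. Denote by $L_n \doteq D\Phi_{n-1,n}(0)\circ\cdots\circ D\Phi_{0,1}(0)$ the linear cocycle and consider the renormalised maps $\Psi_n \doteq L_n^{-1}\circ \Phi_{n-1,n}\circ\cdots\circ \Phi_{0,1}$, which are automorphisms of $\C^q$ tangent to the identity at $0$. On $r\B^q$ the contraction hypothesis furnishes a uniform bound on $(\Psi_n)$ and one passes to a limit $\Psi_\infty$ on the abstract basin, providing a biholomorphism onto its image $D$. Because the $\Psi_n$ are globally defined on $\C^q$, one would then try to extend this convergence, via a diagonal normal-family argument combined with the Runge property of $D$, to produce a surjective biholomorphism $\C^q\to\C^q$.

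The main obstacle, and the reason the conjecture remains open, lies precisely in this last step. For $q\geq 2$ the group of holomorphic automorphisms of $\C^q$ is enormous and highly flexible, and away from $r\B^q$ the maps $\Phi_{n,n+1}$ need satisfy no contraction estimate whatsoever, so the renormalised maps $\Psi_n$ can distort large balls in a completely uncontrolled way as $n\to\infty$. Known partial results (\cite{Jonsson-Varolin}, \cite{Peters}, \cite{Wold}, \cite{Abate-Abbondandolo-Majer}) circumvent this difficulty by imposing additional structure --- commuting or upper-triangular linear parts, polynomial shears, dominated splittings of the $L_n$ --- and a genuinely new ingredient would be required for the unrestricted statement above.
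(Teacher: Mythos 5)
This statement is \emph{Bedford's conjecture}; the paper states it explicitly as a \texttt{conjecture} environment and offers no proof --- indeed it points out that the question of whether the Loewner range is biholomorphic to $\C^q$ when $\ell\geq 2$ is open precisely because this conjecture is open. So there is no proof in the paper to compare yours against, and you were right not to claim one: your text is a survey of a strategy and of why it fails, and your diagnosis of where the difficulty sits (no control on $\Phi_{n,n+1}$ outside $r\B^q$, and the flexibility of $\Aut(\C^q)$ for $q\geq 2$) is accurate. You also correctly note that the hypotheses already give, via Theorem \ref{squall} and \cite[Theorem 3.1]{Fornaess-Stensones}, that the basin is a Stein, Runge domain diffeomorphic to $\C^q$, and that the known affirmative cases (\cite{Wold} under $\mu^2<\nu$, \cite{Jonsson-Varolin}, \cite{Peters}, \cite{Abate-Abbondandolo-Majer}) all impose extra structure.

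One inaccuracy worth flagging: you assert that the renormalised maps $\Psi_n=L_n^{-1}\circ\Phi_{n-1,n}\circ\cdots\circ\Phi_{0,1}$ are uniformly bounded on $r\B^q$ and converge on the abstract basin under the stated hypotheses alone. This is not known, and is in fact the other half of the difficulty: when $\mu^2\geq\nu$, resonances between the contraction rates obstruct the convergence of the \emph{linearly} renormalised cocycle even near the origin, which is exactly why \cite{Wold} needs $\mu^2<\nu$ and why \cite{Jonsson-Varolin} and \cite{Abate-Abbondandolo-Majer} must renormalise by polynomial (not linear) normal forms along the orbit. So the obstruction is not only ``global distortion away from $r\B^q$'' but already the local normal-form problem. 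With that caveat, your assessment that the statement is open and that a genuinely new idea is required is the correct conclusion.
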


\section{Main result}
Let $\mathcal{N}$ denote the family of holomorphic mappings $h\colon \B^q\to \C^q$ such that  $h(0)=0$ and $\Re\langle h(z),z \rangle >0,$ for all $z\neq 0$.

\begin{theorem}\label{main}
Let $h(z,t)\colon \B^q\times \R^+\to \C^q$ be a mapping such that 
 $z\mapsto h(z,t)\in \mathcal{N}$ for all $t\in \R^+$ and $t\mapsto h(z,t)$ is measurable on $\R^+$ for all $z\in \B^q$.
Assume that $h(z,t)=A(t)z+O(|z|^2)$  and that the family of linear mappings $(A(t))_{t\geq 0}$ satisfies:
\begin{itemize}
\item[a)] $m(A(t))>0$ for all $t\geq 0$ and $\int_0^\infty m(A(t))=\infty$,
\item[b)] $t\mapsto \|A(t)\|$ is locally bounded on $\R^+$,
\item[c)]  there exists $\ell\in \mathbb{R}^+$ such that $$\ell m(A(t))\geq k(A(t)),\quad t\geq 0.$$
\end{itemize} 
Then the Loewner PDE
$$\frac{\partial f_t(z)}{\partial t}=Df_t(z)h(z,t),\quad z\in\B^q,\ \mbox{a.e.}\ t\geq 0$$ admits a global solution given by univalent mappings $(f_t\colon \B^q\to \C^q)$. If $l<2$, then the range $\bigcup_{t\geq 0} f_t(\B^q)$ of any such solution is biholomorphic to $\C^q$.
Any other solution given by holomorphic mappings  $(g_t\colon\B^q\to\C^q)$ is of the form $(\Lambda\circ f_t)$, where $\Lambda\colon \bigcup_{t\geq 0}f_t(\B^q)\to \C^q$ is holomorphic. 

\end{theorem}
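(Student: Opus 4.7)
The plan is to apply Theorem~\ref{squall} to a discretization of the evolution family $(\v_{s,t})$ generated by $-h$, and then glue the discrete picture back into a continuous univalent family of mappings on $\B^q$. First I would fix a small radius $r>0$ such that on $r\B^q$ the nonlinear term of $h(z,t)$ is uniformly dominated by the linear part $A(t)z$; using that the $O(|z|^2)$ bound is controlled by the local bound on $\|A(t)\|$ (hypothesis (b)), standard Gr\"onwall arguments applied to $\tfrac{d}{dt}|\v_{s,t}(z)|^2 = -2\Re\la h(\v_{s,t}(z),t),\v_{s,t}(z)\ra$ give on $r\B^q$ the two-sided control
\begin{equation*}
|z|\exp\!\Bigl(-(1+\epsilon)\!\int_s^t k(A(\tau))\,d\tau\Bigr) \;\leq\; |\v_{s,t}(z)| \;\leq\; |z|\exp\!\Bigl(-(1-\epsilon)\!\int_s^t m(A(\tau))\,d\tau\Bigr).
\end{equation*}

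Next I would choose a sequence $0=t_0<t_1<t_2<\cdots$ with $t_n\to\infty$, arranged so that $\int_{t_n}^{t_{n+1}} m(A(\tau))\,d\tau$ lies in a fixed bounded interval bounded away from $0$ (possible by (a) and (b)). Setting $\phi_n\doteq\v_{t_n,t_{n+1}}|_{r\B^q}$, the inequality above together with the bunching hypothesis (c) yields $0<\nu\le\mu<1$ such that $\nu|z|\le|\phi_n(z)|\le\mu|z|$ on $r\B^q$, uniformly in $n$; this is exactly the hypothesis of Theorem~\ref{squall}. The main technical obstacle I anticipate is precisely this uniform lower bound $\nu>0$: one must show that $k(A(t))$ cannot spike arbitrarily large relative to $m(A(t))$ on any sub-interval $[t_n,t_{n+1}]$, and this is where condition (c) is essential (without it, a short burst of large $k$ could make the single-step contraction arbitrarily close to $0$, breaking Theorem~\ref{squall}). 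I would also verify along the way that each $\phi_n$ is univalent, which follows from $\phi_n$ being the time-$(t_{n+1}-t_n)$ map of an evolution family and hence a limit of compositions of holomorphic self-maps fixing $0$ with invertible derivative.

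Theorem~\ref{squall} then supplies the abstract basin $\Omega$, the canonical univalent mappings $\omega_n\colon r\B^q\to\Omega$ with $\omega_n=\omega_{n+1}\circ\phi_n$, and a univalent $\Psi\colon\Omega\to\C^q$. I would set $f_{t_n}\doteq\Psi\circ\omega_n$ on $r\B^q$, then extend spatially to all of $\B^q$ by the formula $f_{t_n}(z)\doteq f_{t_N}(\v_{t_n,t_N}(z))$ for any $N$ large enough that $\v_{t_n,t_N}(z)\in r\B^q$ (such $N$ exists for every $z\in\B^q$ because the flow contracts to $0$ by (a), and the definition is independent of $N$ by the semigroup relation $\omega_N\circ\phi_{N-1}\circ\cdots\circ\phi_n=\omega_n$). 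For $t\in[t_n,t_{n+1}]$ I would define $f_t\doteq f_{t_{n+1}}\circ\v_{t,t_{n+1}}$; one then checks the functional equation $f_s=f_t\circ\v_{s,t}$ for all $0\le s\le t$, which by the result of \cite{Arosio-Bracci-Hamada-Kohr} quoted in the introduction is equivalent, together with local Lipschitz continuity in $t$ (inherited from (\ref{ck-evd})), to $(f_t)$ solving the Loewner PDE.

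For the biholomorphism claim when $\ell<2$, I would invoke the stronger statement in \cite{Fornaess-Stensones} (their Theorem~3.1 refined by the bunching hypothesis): when the ratio between $\mu$ and $\nu$ in (\ref{peter}) satisfies an appropriate bunching inequality, the abstract basin $\Omega$ is not merely Runge in $\C^q$ but is biholomorphic to $\C^q$; I would verify that the constants $\nu,\mu$ produced above from (c) satisfy this inequality precisely when $\ell<2$, which gives $\bigcup_{t\ge0}f_t(\B^q)\cong\Omega\cong\C^q$. Finally, the uniqueness statement is standard: if $(g_t)$ is any other holomorphic solution of~(\ref{PDE}), then $g_t\circ\v_{s,t}=g_s$ and $f_t\circ\v_{s,t}=f_s$ together with injectivity of $f_t$ let one define $\Lambda\doteq g_t\circ f_t^{-1}$ on $f_t(\B^q)$, check that these local definitions agree on overlaps again by the functional equation, and hence produce a globally defined holomorphic $\Lambda\colon\bigcup_{t\ge 0}f_t(\B^q)\to\C^q$ with $g_t=\Lambda\circ f_t$.
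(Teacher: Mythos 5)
Your overall architecture matches the paper's: discretize the evolution family at times where the accumulated $m$-integral advances by a fixed amount, verify the two-sided contraction estimate on a small ball $r\B^q$, invoke Fornaess--Stens\o nes for the abstract basin $\Omega$ and a univalent $\Psi\colon\Omega\to\C^q$, build $f_t$ by the limit formula, and use the functional equation to conclude. The univalence/uniqueness arguments are also essentially those of the paper (via \cite[Theorems 4.7, 4.10, 5.2]{Arosio-Bracci-Hamada-Kohr}).

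However, your argument for the biholomorphism $\bigcup_{t\ge0}f_t(\B^q)\cong\C^q$ when $\ell<2$ has a genuine gap. You appeal to ``the stronger statement in \cite{Fornaess-Stensones} (their Theorem~3.1 refined by the bunching hypothesis)'' asserting that under a bunching inequality the abstract basin $\Omega$ is biholomorphic to $\C^q$. No such statement exists: \cite[Theorem~3.1]{Fornaess-Stensones} gives only that $\Omega$ embeds as a Stein, Runge domain with vanishing Kobayashi pseudometric, diffeomorphic to $\C^q$ --- precisely the conclusion the paper records for general $\ell$, and explicitly identifies as \emph{not} enough; whether $\Omega\cong\C^q$ in general is Bedford's conjecture. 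The actual argument for $\ell<2$ requires a different key ingredient: choose $r$ so small that $C^2(r)\ell<h$ where $h$ is the least integer $>\ell$, giving $\mu^h<\nu$; use jet interpolation (Forstneri\v{c}, Weickert) to produce global automorphisms $\Phi_{n,n+1}$ of $\C^q$ agreeing with $\v_{u_n,u_{n+1}}$ to order $O(|z|^h)$; invoke \cite[Remark~A.4]{Abate-Abbondandolo-Majer} to identify $\Omega$ with the (concrete) non-autonomous basin $\mathfrak{A}(\Phi_{n,n+1})\subset\C^q$; and finally, when $\ell<2$ forces $h=2$, apply Wold's theorem \cite[Theorem~4]{Wold}, which says that such a basin of a sequence of automorphisms satisfying $\mu^2<\nu$ is biholomorphic to $\C^q$. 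Without this replacement by automorphisms, there is nothing for Wold's theorem to bite on, and the Fornaess--Stens\o nes results alone do not yield $\Omega\cong\C^q$.

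A smaller point: your choice of $r$ needs to be tied explicitly to the bunching, not merely to ``domination of the nonlinear term.'' The paper fixes $r$ so that $C^2(r)<h/\ell$, which is exactly what guarantees the single-step bound $\mu^h<\nu$ needed downstream. Your formulation with $(1\pm\epsilon)$ is compatible, but you should make explicit that $\epsilon$ must be chosen small enough (as a function of $\ell$) so that the discrete maps satisfy the $h=2$ bunching, and then feed that into the automorphism construction above rather than into a nonexistent refinement of Fornaess--Stens\o nes.
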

\begin{proof}
Notice that for all $A\in \mathcal{L}(\C^q)$, $$m(A)\leq k(A)\leq \|A\|,$$ and thus $k(t)$ and $m(t)$ are also locally bounded on $\R^+$.
By \cite[Lemma 1.2]{Graham-Hamada-Kohr-Kohr} one has for a.e. $t\geq 0$ $$|h(z,t)|\leq\frac{4r}{(1-r)^2}\|A(t)\|,\quad |z|\leq r<1,$$ hence $-h(z,t)$ is a Herglotz vector field of order $\infty$ on $\B^q$. Let $(\v_{s,t})$ be the associated evolution family of order $\infty$, that is the solution of the  Loewner ODE 
\begin{equation}
\begin{cases}
\frac{\de}{\de t} \v_{s,t}(z)=-h(\v_{s,t}(z),t),\quad z\in\B^q,\ \mbox{a.e.}\ t\in [s,\infty),\\
 \v_{s,s}(z)=z ,\quad z\in\B^q, s\geq 0.
\end{cases}
\end{equation} 
Recall that $\v_{s,t}\colon\B^q\to\B^q$ is an univalent mapping for all $0\leq s\leq t$ and that
 $t\mapsto \v_{s,t}(z)$ is locally Lipshitz continuous on $[s,\infty)$ uniformly on compact sets with respect to $z \in \B^q$.

Fix $s\geq 0$ and $z\in \B^q\smallsetminus\{0\}$.   
Then for a.e. $\tau\geq 0$, $$\frac{\de}{\de \tau} |\v_{s,\tau}(z)|^2=2\Re\left\langle\frac{\de}{\de\tau} \v_{s,\tau}(z),\v_{s,\tau}(z)\right\rangle=-2\Re\langle h(\v_{s,\tau}(z),\tau),\v_{s,\tau}(z)\rangle.$$
Set $C(r)\doteq\frac{1+r}{1-r}$ and $c(r)\doteq\frac{1-r}{1+r}$ for all $r\geq 0.$ Gurganus proved \cite{Gurganus} that for a.e. $t\geq 0$,
$$\Re\langle A(t)w,w\rangle c(|w|)\leq \Re\langle h(w,t),w\rangle\leq  \Re\langle A(t)w,w\rangle C(|w|),\quad w\in \B^q\smallsetminus\{0\}.$$

Since $|\v_{s,\tau}(z)|\leq |z|$ one has using assumption d), $$-2k(A(\tau))C(|z|)\leq \frac{\frac{\de}{\de \tau} |\v_{s,\tau}(z)|^2}{ |\v_{s,\tau}(z)|^2}\leq -2m(A(\tau)) c(|z|),\quad \mbox{a.e.}\ \tau\geq 0,$$ 
$$-2C(|z|)\int_s^tk(A(\tau))d\tau\leq\int_s^t \frac{\frac{\de}{\de \tau} |\v_{s,\tau}(z)|^2}{ |\v_{s,\tau}(z)|^2}d\tau\leq-2c(|z|)\int_s^tm(A(\tau)) d\tau,\quad 0\leq s\leq t,$$ 
\begin{equation}\label{ultima}
   e^{-C(|z|)\int_s^tk(A(\tau)) d\tau}\leq  \frac{|\v_{s,t}(z)|}{|z|}\leq e^{-c(|z|)\int_s^tm(A(\tau)) d\tau},\quad 0\leq s\leq t. 
\end{equation}
 Set for all $0\leq s\leq t$, $$\nu_{s,t}\doteq e^{-C(|z|)\int_s^tk(A(\tau)) d\tau},\quad\mbox{and}\quad \mu_{s,t}\doteq e^{-c(|z|)\int_s^tm(A(\tau)) d\tau}.$$
One has, thanks to assumption c), 
\begin{equation}\label{aeris}
\log_{\mu_{s,t}} \nu_{s,t}= \frac{\log \nu_{s,t}}{\log \mu_{s,t}}=C^2(|z|)\frac{\int_s^tk(A(\tau))d\tau}{\int_s^tm(A(\tau))d\tau}\leq C^2(|z|)\ell ,\quad 0\leq s\leq t.
\end{equation}
Let $n\in \mathbb{N}$ and let $u_n\in \mathbb{R}^+$ be defined by $$\int_0^{u_n}m(A(\tau))d\tau=n.$$ 
Let now $h\in \mathbb{N}$ be the least integer strictly greater than  $\ell$, and let $r>0$ be such that  $C^2(r)< h/\ell$. Set $\mu\doteq  e^{-c(r)}$ (notice that $\mu=\mu_{u_n,u_{n+1}}$ for all $n\geq 0$) and $\nu\doteq \min\{\nu_{u_n,u_{n+1}}: n\geq 0\}$. By (\ref{ultima}) and  (\ref{aeris})
one has that
$$
\nu|z|\leq |\v_{u_n,u_{n+1}}(z)|\leq \mu|z|,\quad z\in r\B^q, n\geq 0.
$$
and  
\begin{equation}\label{h}
\mu^{h}<\nu.
\end{equation}

The {\sl abstract basin of attraction} or {\sl tail space} $\Omega$ of the family $(\v_{u_n,u_{n+1}}\colon r\B^q\to r\B^q)$ is defined in \cite{Fornaess-Stensones} (see also \cite{Abate-Abbondandolo-Majer}) as its topological inductive limit   endowed with a natural complex structure. $\Omega$ is the quotient of the set $$\left\{z\in \prod_{m\geq n}r\B^q:n\in \N,\ z_{m+1}=\v_{u_m,u_{m+1}}(z_m), \quad 0\leq n\leq m\right\},$$ obtained  identifying $z$ and $z'$ if $z_m=z_m'$ for $m$ large enough, and the holomorphic structure is induced by a family of open inclusions $(\omega_n\colon r\B^q\to \Omega)$ defined as $$\omega_n(z)\doteq (\v_{u_n,u_m}(z))_{m\geq n},\quad n\in\N,$$ which are thus by definition biholomorphisms with their image and  satisfy 
\begin{equation}\label{cloudstrife}
\omega_n(z)=\omega_m \circ \v_{u_n,u_m}(z),\quad 0\leq n\leq m,z\in r\B^q.
\end{equation}

By \cite[Theorem 2.2]{Fornaess-Stensones} there exists an univalent mapping  $\Psi\colon \Omega\to \C^q$. 
We claim that, for all $s\geq 0$, the sequence $(\Psi\circ\omega_m\circ \v_{s,u_m})_{m\geq 0}$ converges  uniformly on compact sets in $\Hol (\B^q,\C^q)$. Indeed by equation (\ref{ultima}) and  assumption a)  one has that for all $s\geq 0$, $$\lim_{m\to\infty} \v_{s,u_m}(z)=0,$$ uniformly on compact sets. Thus, if $0<v<1$, there exist $m(v)\in \mathbb{N}$ such that  for all $j\geq m(v)$, one has  $\v_{s,u_j}(v\B^q)\subset r\B^q.$ Let $j,h$ be integers such that  $m(v)\leq j\leq h$, then by (\ref{cloudstrife}), $$\Psi\circ\omega_h\circ \v_{s,u_h}(z)=  \Psi\circ\   \omega_j \circ\v_{s,u_j}(z),\quad z\in v\B^q.$$ Thus the sequence $(\Psi\circ\omega_m\circ \v_{s,u_m})$ is eventually constant in $\Hol (v\B^q,\C^q)$.

Let $f_t\colon \B^q\to\C^q$ the univalent mapping defined as 
\begin{equation}\label{tidus}
f_t(z)\doteq   \lim_{m\to+\infty} \Psi\circ\omega_m\circ \v_{t,u_m}(z).
\end{equation} 

One easily verifyies that
\begin{equation}\label{aiuto}
f_s(z)=f_t\circ \v_{s,t}(z),\quad 0\leq s\leq t,z\in \B^q,
\end{equation}
and that $$\bigcup_{t\geq 0}f_t(\B^q)=\Psi(\Omega).$$ Notice that the  abstract basin of attraction of the family $(\v_{u_n,u_{n+1}})$ is thus biholomorphic to the Loewner range of the family $(\v_{s,t})$ defined in \cite{Arosio-Bracci-Hamada-Kohr}. This can be  checked directly  since both objects are defined as direct limits.

 By  \cite[Theorem 4.10]{Arosio-Bracci-Hamada-Kohr} one has   that $(f_t\colon \B^q\to \C^q)$ is a Loewner chain of order $\infty$, that is
\begin{itemize}
\item $f_s(\B^q)\subset f_t(\B^q)$ for all $0\leq s\leq t$,
\item  for any compact set $K\subset \B^q$ and for any $T>0$ there exists a  $k_{K,T}>0$ satisfying 
  \begin{equation}\label{ck-evd}
|f_t(z)- f_s(z)|\leq k_{K,T}(t-s), \quad z\in K,\  0\leq s\leq t<T.
  \end{equation}
\end{itemize}
 By  \cite[Theorem 5.2]{Arosio-Bracci-Hamada-Kohr} one obtains  finally $$\frac{\partial f_t(z)}{\partial t}=Df_t(z)h(z,t),\quad z\in \B^q,\ \mbox{a.e.}\ t\geq 0.$$

Thus any univalent mapping  $\Psi\colon \Omega\to \C^q$ gives rise to a solution $(f_t\colon \B^q\to \C^q)$ of the Loewner PDE. Following \cite[Remark A.4]{Abate-Abbondandolo-Majer} we recall a way to construct such  univalent  mapping $\Psi\colon \Omega\to \C^q$. Given any polynomial map $p\colon \C^q\to \C^q$ of degree at most $k$ there exists \cite{Forstneric}\cite{Weickert} an holomorphic  automorphism $\Phi$ of $\C^q$ such that $$\Phi(z)=p(z)+O(|z|^{k+1}).$$ We choose a  sequence of automorphisms  $(\Phi_{n,n+1}\colon \C^q\to\C^q)$ satisfying $$\Phi_{n,n+1}(z)=\v_{u_n,u_{n+1}}(z)+O(|z|^{h}),\quad n\geq 0,$$
where $h\in \mathbb{N}$  is as in (\ref{h}). We denote the basin of attraction of the sequence $(\Phi_{n,n+1})$ by  $$\mathfrak{A}(\Phi_{n,n+1})\doteq \{z\in \C^q: \lim_{n\to\infty}\Phi_{n-1,n}\circ \cdots\circ\Phi_{0,1} = 0\}.$$ It follows from  \cite[Remark A.4]{Abate-Abbondandolo-Majer} that there exists a biholomorphism   $$\Psi\colon \Omega\to \mathfrak{A}(\Phi_{n,n+1}) \subset \C^q.$$

If $\ell<2$, then $h=2$ and, by \cite[Theorem 4]{Wold}, one has that the basin of attraction  $\mathfrak{A}(\Phi_{n,n+1})$ is biholomorphic to $\C^q$.

  By \cite[Theorem 4.10]{Arosio-Bracci-Hamada-Kohr} any solution $(g_t\colon \B^q\to \C^q)$ of the Loewner PDE has to satisfy  $g_s=g_t\circ \v_{s,t}$ for all $0\leq s\leq t$ and thus \cite[Theorem 4.7]{Arosio-Bracci-Hamada-Kohr} yields that the family $(g_t)$ is of the form $(\Lambda\circ f_t)$, where $\Lambda\colon \bigcup_{t\geq 0}f_t(\B^q)\to \C^q$ is holomorphic.

\end{proof}

\end{document}